\providecommand{\institute}[1]{
  \apptocmd{\@author}{\end{tabular}
    \par
    \begin{tabular}[t]{c}
    #1}{}{}
}
\title{\Large Hypergeometric Distribution Revisited: \\Tail Inequalities, Confidence Bounds and Sample Sizes
}
\author{Anne-Marie George\thanks{Anne-Marie George was supported under the NRC Grant
No 302203 “Algorithms and Models for Socially Beneficial
Artificial Intelligence”.}}
\institute{Department of Informatics, 
        University of Oslo, Norway\\
        {\tt annemage@uio.no}}
\newtheorem{example}{Example}
\newtheorem{lemma}{Lemma}
\newtheorem{corollary}{Corollary}
\newtheorem{remark}{Remark}
\Crefname{construction}{Construction}{Constructions}
\Crefname{claim}{Claim}{Claims}
\Crefname{paragraph}{Paragraph}{Paragraphs}
\Crefname{observation}{Observation}{Observations}
\Crefname{theorem}{Theorem}{Theorems}
\Crefname{lemma}{Lemma}{Lemmata}
\Crefname{proposition}{Proposition}{Propositions}
\Crefname{corollary}{Corollary}{Corollaries}
\Crefname{remark}{Remark}{Remarks}
\Crefname{section}{Section}{sections}
\Crefname{chapter}{Chapter}{Chapters}
\Crefname{figure}{Figure}{Figures}
\Crefname{table}{Table}{Tables}
\Crefname{definition}{Definition}{Definitions}
\Crefname{algorithm}{Algorithm}{Algorithms}
\Crefname{equation}{Equation}{Equations}
\Crefname{appendix}{Appendix}{Appendices}
\begin{document}

\maketitle
\thispagestyle{empty}
\pagestyle{empty}

\begin{abstract}

We revisit and refine known tail inequalities and confidence bounds for the hypergeometric distribution, i.e., for the setting where we sample without replacement from a fixed population with binary values or properties.

The results are presented in a unified notation in order to increase understanding and facilitate comparisons. 
We focus on the usability of the results in practice and thus on simple bounds. Further, we make the computation of confidence intervals and necessary sample sizes explicit in our results and demonstrate their use in an extended example.
\end{abstract}

\section{Introduction}

Sampling without replacement from a fixed population with binary values, i.e., a hypergeometric distribution, is quite common in applications. For example, consider a population of individuals that may \textit{pass} / \textit{fail} a test, or \textit{approve} / \textit{disapprove} an issue, or answer a query with \textit{yes} / \textit{no}, or posses a certain property $1$ / $0$, or test \textit{positive} / \textit{negative}. 
Often we are interested in estimating the number or fraction or percentage of individuals in the population that \textit{pass} or \textit{approve} or answer \textit{yes} or $1$.
Naturally, if the values of the whole population are known, one can simply compute the number or fraction. However, in studies usually only a sample of the population is taken, i.e., a set of individuals is uniformly at random selected, and only their values are known. We can then use the percentage over the sample set to approximate the true percentage over the whole population. Quite intuitively, the more samples are taken, the better the approximation. However, it is desirable to quantify the uncertainty around this approximation. For this we may use tail bounds, concentration inequalities and confidence intervals.
Further, when setting up a study with a target approximation bound and certainty one can calculate the number of samples to reach this target.

Because of its applicability in practice, it is surprising, that known tail bounds and the resulting confidence intervals for the hypergeometric distribution are hard to find and often use confusing and non-unified notation. This has previously been remarked by Matthew Skala in his Arxiv publication that aims to "end the insanity" around these results~\cite{Ska13} and by Thomas Dybdahl Ahle~\cite{Dyb15} in this blog post that extends existing results which are also listed on Wikipedia~\cite{Wiki24}. However, while Skala mentions Serfling's work, all three resources base their results mainly on bounds given by Hoeffding~\cite{Hoe63} or Chv\'atal~\cite{Chv79} and overlook the applicability of their methods to Serfling's slightly tighter bound~\cite{Serf74}. 
While George and Dimitrakakis~\cite{GeDi24} sort out this oversight, their proofs are hidden within other results in an Appendix and use yet again different notation.

Our contribution in this paper is to formulate the aforementioned results in a unified notation, to make their strength and weaknesses more apparent and comparable.
Further, we shape tail inequalities into readily applicable confidence intervals, and determine necessary sample sizes for given confidence requirements.

We start by introducing the notation and an example in~\Cref{sec: prelim}.
In~\Cref{sec: tail ineq} we present tail and concentration inequalities and give a basic comparison.
In~\Cref{sec:conf-intervals} we describe how to obtain confidence intervals from the concentration inequalities when (a) the confidence level is given, or \linebreak(b) the interval size is given.
Based on these results, we calculate in \Cref{sec:nec-samples} how many samples are necessary to obtain a confidence interval with a desired size and level of confidence.
We demonstrate all results on an extended example in \Cref{sec:extended-example}.
The last section concludes.

\section{Preliminaries}\label{sec: prelim}
Consider a population of size $N$ where each individual possesses a binary value. That is, we have $N$ binary values $x_1, \dots, x_N \in \{0,1\}$ where a value of $1$ may represent success, pass, approval, yes etc. and $0$ represent an opposite meaning.

Let the number of individuals with value $1$, called the \textit{positive} individuals, be $M$, i.e., $M=\sum_{i=1,\dots,N} x_i$. Accordingly, the remaining $N-M$ individuals have value $0$ and are called \textit{negative}. 

Assume that we sample $n$ individuals uniformly at random from the population without replacement, i.e., no individual can be sampled twice.
The hypergeometric distribution $h$ gives the probability that among the $n$ samples there are $i$ positive individuals.
That is, 
\[h(N,M,n,i) = \binom{M}{i}\binom{N-M}{n-i}/\binom{N}{n},\] where $\binom{x}{y} = \frac{x!}{y!(x-y)!}$ is the binomial coefficient describing the number of possibilities to choose $y$ elements out of $x$ elements (without repetition).
The probability $h(N,M,n,i)$ of having $i$ positive individuals among $n$ samples can be described as follows: It is equal to the number of possibilities of choosing a sample set containing $i$ positive individuals divided by the number of possibilities to choose any set of $n$ samples, i.e., $\binom{N}{n}$.
Note that the number of possibilities to choose $n$ samples of which exactly $i$ are positive is given by: Multiplying the number of possibilities to choose $i$ individuals out of the $M$ positive ones with the number of possibilities to choose the remaining $n-i$ sample individuals out of the $N-M$ negative ones, i.e., $\binom{M}{i}\binom{N-M}{n-i}$.

\begin{example}\label{ex1}
    Consider $N = 10$ customers that bought a specific product. Assume that $M = 7$ of the $10$ customers were satisfied with the product. Because the customer satisfaction is unknown to the vendor, they try to estimate it by selecting $n=5$ customers uniformly at random and asking whether they were satisfied or not (\textit{yes}/\textit{no}).
    The probability that $i$ out of these $5$ customers answer \textit{yes} is $h(N,M,n,i) = \binom{M}{i}\binom{N-M}{n-i}/\binom{N}{n} = \binom{7}{i}\binom{3}{5-i}/\binom{10}{5} = \binom{7}{i}\binom{3}{5-i}/252$.
    More concretely, by the convention $\binom{x}{y} = 0$ for $x < y$ and $\binom{x}{0} = \binom{x}{x} = 1$ with $x,y \in \mathds{N}$, we have:
    \begin{align*}
    h(N,M,n,0) &= h(N,M,n,1) &&= 0,\\
    h(N,M,n,2) &= \binom{7}{2}\binom{3}{3}/252 &&= 21/252,\\
    h(N,M,n,3) &= \binom{7}{3}\binom{3}{2}/252 = 35\cdot3/252 &&= 105/252,\\
    h(N,M,n,4) &= \binom{7}{4}\binom{3}{1}/252 = 35\cdot3/252 &&= 105/252,\\
    h(N,M,n,5) &= \binom{7}{5}\binom{3}{0}/252 &&= 21/252.
    \end{align*}
\end{example}


Note that \Cref{ex1} does not answer the question of how the vendor should interpret a specific outcome of $n$ samples and what they can conclude about the number of customers $M$ or (equivalently the fraction of customers $M/N$)  that are satisfied with the product. The following sections will provide an answer to this issue.

\section{Tail Inequalities}\label{sec: tail ineq}

Tail inequalities bound the probability of a random variable $X$ drawn from a distribution $d$ having a value in the "tail" of the distribution, i.e., far from the mean. For the hypergeometric distribution the expected number of positive samples out of $n$ samples is $n\frac{M}{N}$. We are thus interested in finding an upper bound on the probability that the number of positive samples $i$ are far from $n\frac{M}{N}$, i.e., \[\mathds{P}[|i-n\frac{M}{N}| \geq c \mid N, M, n]\] for some $c > 0$. 
We can split this up into two separate problems because for $t>0$ and $c=tn$
\begin{align*}
    & \mathds{P}[|i-n\frac{M}{N}| \geq c \mid N, M, n] \\
    = &\mathds{P}[i \leq n\frac{M}{N} - c  \mid N, M, n] + \mathds{P}[i \geq c + n\frac{M}{N} \mid N, M, n] \\
    = &\mathds{P}[i \leq (\frac{M}{N} - t)n  \mid N, M, n] + \mathds{P}[i \geq (\frac{M}{N} +t)n\mid N, M, n].
\end{align*}

Here, we call the first term in the sum the \textit{lower tail} and the second term the \textit{upper tail}. We shall always annotate the size of the population $N$, the number of positive individuals in the total population $M$ and the number of samples $n$ in this order, since they may vary in our subsequent results.
For the hypergeometric distribution, we can exploit some symmetries to transfer bounds for one tail to the other.
We thus first concentrate on some basic upper bounds for the upper tail and then move on to exploit some symmetries to get bounds for the lower tail as well as some refined bounds.
Together, these bounds on the tails give us concentration inequalities, i.e., upper bounds on $\mathds{P}[|i-n\frac{M}{N}| \geq c \mid N, M, n]$.

\begin{example}[continued]\label{ex2}
    Consider the setting from \Cref{ex1} above where the population (total number of customers) is $N=10$, the umber of positive individuals (satisfied customers) is $M=7$ and we take a sample set of \linebreak$n = 5$ individuals uniformly at random.
    Then the expected number of satisfied customers among the sampled ones is $n\frac{M}{N} = 5\frac{7}{10} = 3.5$.
    The probability that the observed number of satisfied customers $i$ deviates by more than $1.5$ from this expectation is given by $\mathds{P}[|i-3.5| \geq 1.5 \mid N=10, M=7, n=5]$.
    This can be split up into two additive terms $\mathds{P}[i \geq 5 \mid N=10, M=7, \linebreak n=5] = \sum_{i \geq 5} h(10,7,5,i)$ (the lower tail) and $\mathds{P}[i \leq 2 \mid N=10, \linebreak M=7, n=5] = \sum_{i \leq 2} h(10,7,5,i)$ (the lower tail).
    
    When $M$ is known, we can compute these probabilities as shown in \Cref{ex1}. Thus $\mathds{P}[|i-3.5| \geq 1.5 \mid N=10, M=7, n=5] = h(10,7,5,0) \linebreak+ h(10,7,5,1) + h(10,7,5,2) + h(10,7,5,5) = 42/252$.
    However, most often $M$ is not known and the vendor might want to estimate the probability of achieving a value far from the true expectation. In this case we can apply tail bounds.
\end{example}

\subsection{Bounds on the Upper Tail}
We start by stating a simple upper bound on the upper tail which, as pointed out by Chv\'atal~\cite{Chv79}, is a direct consequence of Hoeffding's inequality from 1962~\cite{Hoe63}.

\begin{lemma}[\cite{Chv79},~\cite{Hoe63}]\label{le:upper-tail-bound1}
\begin{align*}
    & \mathds{P}[i \geq (p +t)n \mid N, M, n] \\
   \leq & ((\frac{p}{p+c})^{p+c}(\frac{1-p}{1-p-c})^{1-p-c})^n \\
   \leq & e^{-2 t^2 n}
\end{align*}
 for $p=M/N$ and $t>0$.
\end{lemma}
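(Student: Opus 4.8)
The plan is to prove both inequalities via the exponential moment (Chernoff) method, using as the central ingredient Hoeffding's observation that sampling without replacement is, for the purpose of bounding convex functionals, dominated by sampling with replacement. First I would fix $s>0$ and apply Markov's inequality to the nonnegative variable $e^{si}$:
\[
\mathds{P}[i \geq (p+t)n \mid N,M,n] = \mathds{P}[e^{si} \geq e^{s(p+t)n} \mid N,M,n] \leq e^{-s(p+t)n}\,\mathds{E}[e^{si}].
\]
Since $i$ is the number of positives in an $n$-sample drawn without replacement, the whole problem reduces to controlling its moment generating function $\mathds{E}[e^{si}]$.

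The key step is to replace this moment generating function by the one of the corresponding with-replacement (binomial) experiment. By Hoeffding's reduction lemma, for every convex function $f$ the expectation $\mathds{E}[f(i)]$ under sampling without replacement is at most the expectation under sampling with replacement; applying this to the convex map $x \mapsto e^{sx}$ and using $p=M/N$ gives
\[
\mathds{E}[e^{si}] \leq (1-p+pe^{s})^{n},
\]
the moment generating function of a $\mathrm{Bin}(n,p)$ variable. Substituting this yields
\[
\mathds{P}[i \geq (p+t)n \mid N,M,n] \leq \left(\frac{1-p+pe^{s}}{e^{s(p+t)}}\right)^{n}.
\]

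I would then minimize the right-hand side over $s>0$. Differentiating the exponent $\ln(1-p+pe^{s})-s(p+t)$ and setting the derivative to zero gives the optimizer $e^{s}=\frac{(p+t)(1-p)}{p(1-p-t)}$, which is admissible exactly in the nondegenerate range $0<t<1-p$; inserting it back and simplifying produces precisely the first claimed bound $\bigl((\tfrac{p}{p+t})^{p+t}(\tfrac{1-p}{1-p-t})^{1-p-t}\bigr)^{n}$, while the case $p+t\geq 1$ makes the tail probability zero and is trivial. To pass from this exact bound to the cleaner $e^{-2t^2n}$, it suffices to show that the negative logarithm of the bracketed factor, namely the binary relative entropy
\[
(p+t)\ln\tfrac{p+t}{p}+(1-p-t)\ln\tfrac{1-p-t}{1-p},
\]
is at least $2t^2$. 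This is Pinsker's inequality for the two Bernoulli laws with parameters $p+t$ and $p$ (whose total variation distance is $t$); alternatively one verifies it directly, since the difference $\psi(t)$ of the two sides satisfies $\psi(0)=\psi'(0)=0$ and $\psi''(t)=\frac{1}{(p+t)(1-p-t)}-4\geq 0$ because $(p+t)(1-p-t)\leq \tfrac14$.

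The main obstacle is the reduction step: the Chernoff optimization and the entropy inequality are routine calculus, but the bound $\mathds{E}[e^{si}]\leq(1-p+pe^{s})^{n}$ is exactly where the without-replacement structure must be tamed. I expect to either invoke Hoeffding's majorization result as a black box, as the attribution suggests, or to reprove it, where the crux is to represent the without-replacement sample mean as an average over with-replacement samples and then apply Jensen's inequality through the convexity of $f$.
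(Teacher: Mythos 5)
Your proposal is correct, but there is nothing in the paper to compare it against: the paper states this lemma purely as a cited result of Hoeffding and Chv\'atal and gives no proof of it, so the relevant comparison is with those sources. What you have reconstructed is essentially Hoeffding's original argument: Markov's inequality applied to $e^{si}$, Hoeffding's convex-ordering theorem to dominate the without-replacement moment generating function by the binomial one $(1-p+pe^s)^n$, optimization at $e^s=\frac{(p+t)(1-p)}{p(1-p-t)}$ to produce the Kullback--Leibler form of the bound, and Pinsker's inequality (equivalently your $\psi''\geq 0$ computation, which is valid since $(p+t)(1-p-t)\leq\tfrac14$) to relax it to $e^{-2t^2n}$. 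All of these steps check out, including your identification of the crux in reproving the reduction lemma: the without-replacement sum is distributed as a conditional expectation of the with-replacement sum, and Jensen's inequality then gives the domination for convex $f$. For contrast, Chv\'atal's cited note proves the first inequality by a direct combinatorial manipulation of binomial coefficient identities, with no moment generating functions at all; your route is the more standard one and is the one that generalizes beyond binary populations, which is what Serfling's refinement (the paper's \Cref{le:upper-tail-bound2}) builds on. Two minor points: the paper's statement contains a typo --- the middle expression should have $t$ where it writes $c$ --- which you implicitly and correctly read as $t$; and your remark that $p+t\geq 1$ forces zero tail probability is slightly off at the boundary $p+t=1$, where $\mathds{P}[i\geq n\mid N,M,n]=\binom{M}{n}/\binom{N}{n}$ can be positive. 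It is, however, at most $p^n$, which is exactly the limiting value of the first bound as $t\to 1-p$, and $p^n\leq e^{-2(1-p)^2n}$ holds as well, so the lemma remains valid there; this boundary case deserves one explicit line rather than dismissal.
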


Note, that the first bound of $((\frac{p}{p+t})^{p+t}(\frac{1-p}{1-p-t})^{1-p-t})^n$ can equivalently be formulated as $e^{-n D_{KL}(p+t || p)}$, where $D_{KL}$ is the Kullback–Leibler divergence~\cite{Wiki24}. While the first bound is tighter than the second, it is not as easy to handle in practice. Hence the latter bound is far more commonly applied.
However, the second bound of $e^{-2 t^2 n}$ is independent of the population size $N$. Yet, intuitively, taking $n$ samples from a small population should yield $i$ being closer to $M$ than for a larger population. 
Serfling~\cite{Serf74} refines the tail bound from Chv\'atal to include an additional term dependant on the population size.\footnote{Serfling's result is in fact a bit more general as it may handle non-binary population values in an interval $[a,b]$.}

\begin{lemma}[\cite{Serf74}]\label{le:upper-tail-bound2}
    \[\mathds{P}[i \geq (\frac{M}{N} +t)n \mid N, M, n] \leq  e^{-2 t^2 n / (1-f)} =  e^{-2 t^2 n \frac{N}{N-n+1}}\] for $t>0$ and $f=(n-1)/N$.
\end{lemma}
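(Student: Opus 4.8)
The plan is to use the exponential moment (Chernoff) method. Writing $p=M/N$ and letting $S_n$ denote the number of positive individuals among the $n$ ordered samples $X_1,\dots,X_n$ drawn without replacement, the event $\{i \geq (p+t)n\}$ is exactly $\{S_n - np \geq tn\}$. For any $h>0$, Markov's inequality applied to $e^{hS_n}$ gives
\[
\mathds{P}[S_n - np \geq tn \mid N,M,n] \leq e^{-htn}\,\mathds{E}\!\left[e^{h(S_n-np)}\right].
\]
Everything then hinges on a good bound for the moment generating function, and the target is
\[
\mathds{E}\!\left[e^{h(S_n-np)}\right] \leq \exp\!\left(\tfrac{1}{8}h^2 n (1-f)\right),
\]
with $f=(n-1)/N$. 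Granting this, I would substitute and minimise $-htn+\tfrac18 h^2 n(1-f)$ over $h$ (the optimum is $h=4t/(1-f)$), reading off the exponent $-2t^2 n/(1-f)$, which is precisely the claimed bound since $1/(1-f)=N/(N-n+1)$. This final optimisation is a routine one-variable calculus step.

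The real work is the moment generating function bound, and in particular extracting the factor $(1-f)=(N-n+1)/N$ rather than the crude factor $1$ that a with-replacement comparison alone would yield. Here I see two routes. The first is Serfling's original idea: compare sampling without replacement to sampling with replacement through Hoeffding's convexity result (for any convex $g$, $\mathds{E}[g(S_n)]$ under sampling without replacement is at most its value under i.i.d.\ sampling), but applied in a refined, stepwise manner so that the shrinking of the remaining population is tracked and contributes the extra saving. The second route is a martingale-difference argument: expose the draws one at a time and work with the reverse martingale formed by the running averages $\bar X_k=\tfrac1k\sum_{j\le k}X_j$, which satisfy $\mathds{E}[\bar X_{k-1}\mid \bar X_k,\dots,\bar X_N]=\bar X_k$ with $\bar X_N=p$, then apply Hoeffding's lemma to each conditional increment and sum the per-step contributions.

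I expect the extraction of the $(1-f)$ factor to be the main obstacle. A naive Azuma--Hoeffding bound that assigns range $1$ to each of the $n$ increments reproduces only the weaker $e^{-2t^2 n}$ of \Cref{le:upper-tail-bound1} and loses the population-size dependence entirely; hence \Cref{le:upper-tail-bound1} cannot simply be invoked as a black box, and a genuinely sharper input is required. The gain must come from the fact that under sampling without replacement each successive conditional draw is increasingly constrained by the already-drawn values, which diminishes the effective variance of the later increments. Making this quantitative --- whether by carefully bounding the conditional variances in the stepwise convexity comparison, or by the weighted summation of the reverse-martingale increments (whose bound at step $k$ scales like $1/k$) --- is the delicate calculation that produces contributions summing to $n(1-f)$, and thus Serfling's sharpening. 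Once that moment generating function bound is secured, the rest of the proof is the mechanical Chernoff optimisation described above.
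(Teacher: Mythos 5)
There is a genuine gap, and it sits exactly where the lemma's content is. Note first that the paper itself offers no proof of \Cref{le:upper-tail-bound2}: it is imported by citation from Serfling (1974), so the only question is whether your argument stands on its own. It does not. The Chernoff reduction and the optimisation $h=4t/(1-f)$ yielding the exponent $-2t^2n/(1-f)$ are correct but routine, as you say yourself. All of the substance is in the moment generating function bound $\mathds{E}\left[e^{h(S_n-np)}\right]\leq \exp\left(\tfrac18 h^2 n(1-f)\right)$, and this you only \emph{assert} as a target, then list two strategies that might establish it without carrying out either. A proof that defers ``the real work'' and ``the delicate calculation'' to an unexecuted plan is a proof outline, not a proof; the deferred step \emph{is} Serfling's theorem.

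Worse, the one route you describe in enough detail to check does not land on the stated constant. For the reverse martingale $\bar X_k$, the increment is $\bar X_{k-1}-\bar X_k = \frac{S_k}{k(k-1)}-\frac{X_k}{k-1}$, so its conditional range given $S_k$ is $\frac{1}{k-1}$, and Azuma--Hoeffding over $k=n+1,\dots,N$ gives an exponent governed by $\sum_{j=n}^{N-1} j^{-2}$. Bounding this sum by telescoping (e.g.\ $j^{-2} < \frac{1}{j-1/2}-\frac{1}{j+1/2}$) produces a factor of roughly $\frac{(n-\frac12)(N-\frac12)}{N-n}$ in place of Serfling's $\frac{nN}{N-n+1}$; these are incomparable in general, and for small $n$ (say $n=2$, $N=10$: $1.78$ versus $2.22$) the martingale route is strictly \emph{weaker}, so it cannot prove the lemma as stated. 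Serfling's actual argument uses the forward martingale $T_k = \frac{S_k - k\mu}{N-k}$ and an induction on the exponential moment to extract exactly the factor $1-f$ with $f=(n-1)/N$; your first route (a ``refined, stepwise'' version of Hoeffding's convexity comparison) is too vague to evaluate and is not how that extraction works. In short: the skeleton is right, but the bone that bears the load is missing.
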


\subsection{Exploiting Symmetry to Bound the Lower Tail}
As pointed out by Skala~\cite{Ska13}, we may exploit some basic symmetry to obtain an upper bound on the lower tail. While they state the used symmetry on the hypergeometric distribution wrong, they do apply it correctly to obtain the following result.

\begin{lemma}[\cite{Ska13}]\label{le:upper-lower-tail}
    \[\mathds{P}[i \leq k \mid N, M, n] = \mathds{P}[i \geq n-k \mid N, N-M, n].\]
\end{lemma}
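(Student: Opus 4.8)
The plan is to reduce the claimed identity between the two tail probabilities to a \emph{pointwise} symmetry of the hypergeometric mass function, and then finish with a routine re-indexing of the sum. The guiding intuition is that swapping the labels \emph{positive} and \emph{negative} turns a population with $M$ positives into one with $N-M$ positives, and simultaneously turns the event ``at most $k$ of the $n$ samples are positive'' into the event ``at least $n-k$ of the $n$ samples are positive'': a sample of size $n$ with at most $k$ positives contains at least $n-k$ negatives, which become the new positives.

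First I would establish the pointwise identity
\[
h(N,M,n,i) = h(N,N-M,n,n-i)
\]
directly from the definition. Writing out the right-hand side gives $\binom{N-M}{n-i}\binom{N-(N-M)}{n-(n-i)}/\binom{N}{n} = \binom{N-M}{n-i}\binom{M}{i}/\binom{N}{n}$, which equals $\binom{M}{i}\binom{N-M}{n-i}/\binom{N}{n} = h(N,M,n,i)$ since the two binomial factors merely commute. This is the entire content of the symmetry and needs nothing beyond the definition of $h$.

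Next I would expand the left-hand side of the claim as a sum over the mass function and apply this identity termwise:
\[
\mathds{P}[i \leq k \mid N, M, n] = \sum_{i=0}^{k} h(N,M,n,i) = \sum_{i=0}^{k} h(N,N-M,n,n-i).
\]
Substituting $j = n-i$, the index $i$ ranging over $\{0,\dots,k\}$ corresponds to $j$ ranging over $\{n-k,\dots,n\}$, so the sum becomes $\sum_{j=n-k}^{n} h(N,N-M,n,j) = \mathds{P}[i \geq n-k \mid N, N-M, n]$, which is exactly the right-hand side.

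Since every step is an equality, there is no genuine obstacle here; the only thing to watch is the bookkeeping in the change of variables, namely that $j = n-i$ maps the range $\{0,\dots,k\}$ precisely onto $\{n-k,\dots,n\}$, so that no term outside the valid support is silently introduced or dropped. It is worth noting that this is exactly the symmetry that Skala~\cite{Ska13} stated incorrectly at the level of the mass function, so I would take care to record the clean pointwise identity $h(N,M,n,i) = h(N,N-M,n,n-i)$ rather than an erroneous variant of it.
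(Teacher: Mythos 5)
Your proposal is correct and follows essentially the same route as the paper: establish the pointwise symmetry $h(N,M,n,i) = h(N,N-M,n,n-i)$ and then re-index the sum over the tail. The only (harmless) difference is that you verify the pointwise identity algebraically from the definition of $h$, whereas the paper justifies it by the verbal relabeling argument of flipping all population values.
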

\begin{proof}
As before, let $h(N,M,n,i)$ denote the probability of uniformly at random drawing $n$ individuals from which $i$ are positive, when the total population size is $N$ and $M$ of the total population are positive.
Note that this is the same as flipping all the values in the population such that there are $N-M$ positive individuals (with value $1$), and having $n-i$ out of $n$ uniformly at random sampled individuals that are positive (have value $1$), i.e., \[h(N,M,n,i) = h(N,N-M,n,n-i).\]
By applying this symmetry and rewriting summation indices we obtain
\begin{align*}
    \mathds{P}[i \leq k \mid N, M, n] &= \sum_{i = 0, \dots, k} h(N,M,n,i) \\
    &= \sum_{i = 0, \dots, k} h(N,N-M,n,n-i)  \\
    &= \sum_{i = n-k, \dots, n} h(N,N-M,n,i)  \\
    &= \mathds{P}[i \geq n-k \mid N, N-M, n].
\end{align*}
\end{proof}

To apply our tail bounds to the lower tail, we reformulate the previous result further.
For $t>0$ and $M'=N-M$:
\begin{align}
    &\mathds{P}[i \leq (\frac{M}{N} -t)n \mid N, M, n]  \nonumber\\
    = &\mathds{P}[i \geq n - \frac{M}{N}n  + tn \mid N, N-M, n]  \nonumber\\
    = &\mathds{P}[i \geq \frac{N-M}{N}n + tn \mid N, N-M, n]  \nonumber\\
    = &\mathds{P}[i \geq (\frac{M'}{N}+t)n \mid N, M', n].\label{eq:symmetry1}
\end{align}
Here the first equality follows directly from \Cref{le:upper-lower-tail}. 
\begin{remark}[Upper and Lower Tail Bounds are Equal]\label{remark1}
    Any upper bound on the upper tail of the hyper geometric distribution, $\mathds{P}[i \leq (\frac{M}{N} -t)n \mid N, M, n]$, is also a bound on the lower tail, $\mathds{P}[i \geq (\frac{M}{N}+t)n \mid N, M, n]$, and vice versa, as long as the bound is independent of $M$.
\end{remark}

Skala~\cite{Ska13} proceeds by applying Chv\'atal's bound from \Cref{le:upper-tail-bound1} to \Cref{eq:symmetry1} yielding the same bound for the lower tail.

\begin{corollary}[\cite{Ska13}]\label{le:lower-tail-bound1}
For $c>0$ we have
    \[\mathds{P}[i \leq (\frac{M}{N} - t)n \mid N, M, n] \leq e^{-2 t^2 n}.\]
\end{corollary}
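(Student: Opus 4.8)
The plan is to combine the symmetry identity of \Cref{eq:symmetry1} with the second, population-independent bound of \Cref{le:upper-tail-bound1}, exactly as anticipated by \Cref{remark1}. The guiding observation is that Chv\'atal's bound $e^{-2t^2n}$ does not reference the number of positive individuals, so it may be transported from the upper tail of one population to the lower tail of another at no cost.

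First I would rewrite the lower tail of the original population as an upper tail of the complementary population via \Cref{eq:symmetry1}, with $M'=N-M$:
\[\mathds{P}[i \leq (\frac{M}{N} - t)n \mid N, M, n] = \mathds{P}[i \geq (\frac{M'}{N} + t)n \mid N, M', n].\]
Next I would apply \Cref{le:upper-tail-bound1} to the right-hand side with $p = M'/N$, obtaining $\mathds{P}[i \geq (\frac{M'}{N} + t)n \mid N, M', n] \leq e^{-2t^2n}$. Since this bound is independent of $M'$ (and hence of $M$), chaining the two relations yields the claimed inequality immediately.

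There is essentially no obstacle beyond bookkeeping. The only points to verify are that the deviation $tn$ is preserved under the value-flipping symmetry---which holds because flipping sends $i \mapsto n-i$ and $M/N \mapsto M'/N$, so the additive displacement $tn$ is unchanged---and that the $M$-independence of Chv\'atal's bound makes \Cref{remark1} directly applicable. Both are immediate from the setup, so the corollary follows in two lines once \Cref{eq:symmetry1} and \Cref{le:upper-tail-bound1} are invoked.
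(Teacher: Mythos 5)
Your proof is correct and follows exactly the paper's own route: the paper obtains this corollary by applying the $M$-independent bound $e^{-2t^2n}$ from \Cref{le:upper-tail-bound1} to the right-hand side of \Cref{eq:symmetry1}, i.e., to the upper tail of the complemented population $M'=N-M$, which is precisely your argument. Nothing is missing; the observation that the bound does not depend on the number of positive individuals is indeed the key point (as codified in \Cref{remark1}).
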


As described in a proof in the Appendix of~\cite{GeDi24}, we can similarly obtain a (tighter) upper bound on the lower tail from applying Serfling's bound from \Cref{le:upper-tail-bound2} to \Cref{eq:symmetry1} which again yields the same bound as for the upper tail since the number of positive individuals in the total population is irrelevant for the bound.

\begin{corollary}\label{le:lower-tail-bound2}
For $c>0$ we have
    \[\mathds{P}[i \leq  (\frac{M}{N} - t)n \mid N, M, n] \leq e^{-2 t^2 n \frac{N}{N-n+1}}.\]
\end{corollary}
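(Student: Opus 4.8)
The plan is to follow precisely the route used for \Cref{le:lower-tail-bound1}, but substituting Serfling's sharper estimate for Chv\'atal's. First I would invoke the symmetry identity already established in \Cref{eq:symmetry1}, which rewrites the lower tail as an upper tail for the population with its binary values flipped:
\[\mathds{P}[i \leq (\tfrac{M}{N}-t)n \mid N, M, n] = \mathds{P}[i \geq (\tfrac{M'}{N}+t)n \mid N, M', n],\]
where $M' = N-M$. This recasts the quantity of interest into exactly the upper-tail form to which \Cref{le:upper-tail-bound2} applies.

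I would then apply Serfling's bound from \Cref{le:upper-tail-bound2} to the right-hand side, reading the population parameter as $M'$ rather than $M$. This directly produces the estimate $e^{-2t^2 n \frac{N}{N-n+1}}$.

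The single point that must be verified --- and which is the entire content of the argument, already flagged in \Cref{remark1} --- is that Serfling's bound does not depend on the number of positive individuals in the population. Inspecting \Cref{le:upper-tail-bound2}, the exponent $-2t^2 n \tfrac{N}{N-n+1}$ involves only $t$, $n$, and $N$; the parameter $M$ does not appear at all. Hence replacing $M$ by $M'$ leaves the bound identical, and chaining the identity with the bound yields
\[\mathds{P}[i \leq (\tfrac{M}{N}-t)n \mid N, M, n] = \mathds{P}[i \geq (\tfrac{M'}{N}+t)n \mid N, M', n] \leq e^{-2t^2 n \frac{N}{N-n+1}},\]
as claimed. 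I do not anticipate any genuine obstacle here: the whole proof reduces to the observation that Serfling's estimate is $M$-free, which is exactly the property that permits the symmetry to transport it unchanged from the upper tail to the lower tail.
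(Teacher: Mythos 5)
Your proposal is correct and follows exactly the paper's intended argument: apply the symmetry identity in \Cref{eq:symmetry1} to convert the lower tail into an upper tail for the flipped population with $M' = N-M$, then invoke Serfling's bound from \Cref{le:upper-tail-bound2}, noting (as in \Cref{remark1}) that the bound is independent of the number of positive individuals, so it transfers unchanged. No gaps.
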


\subsection{Exploiting Symmetry to Refine Tail Bounds}
We now turn to a different type of symmetry in order to tighten the tail bounds further for the case that the number of samples $n$ is large.
As noted by Dybdahl Ahle~\cite{Dyb15}, though not shown in detail nor formally published, we have the following relation between upper and lower tail, which is based on the symmetry arising from swapping the roles of sampled and not sampled individuals:

\begin{lemma}[\cite{Dyb15}]\label{le:upper-lower-tail2}
    \[\mathds{P}[i \leq k \mid N, M, n] = \mathds{P}[i \geq M-k \mid N, M, N-n].\]
\end{lemma}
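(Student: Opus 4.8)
The plan is to follow the same template as the proof of \Cref{le:upper-lower-tail}: first establish a pointwise symmetry on the hypergeometric probabilities, and then sum over the lower tail and re-index the summation.

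First I would establish the pointwise identity
\[h(N,M,n,i) = h(N,M,N-n,M-i).\]
The combinatorial reason is that choosing which $n$ individuals to sample is the same as choosing which $N-n$ individuals \emph{not} to sample, and these two choices determine each other uniquely. If exactly $i$ of the $n$ sampled individuals are positive, then the remaining $M-i$ positive individuals must lie among the $N-n$ non-sampled ones; hence the complementary set of size $N-n$ contains exactly $M-i$ positives. Since the bijection between sample sets and their complements preserves the count of favourable outcomes, both sides are equal. Alternatively, I would verify this algebraically from the definition of $h$ using $\binom{M}{M-i}=\binom{M}{i}$, $\binom{N}{N-n}=\binom{N}{n}$, and the identity $(N-n)-(M-i)=(N-M)-(n-i)$, which gives $\binom{N-M}{(N-n)-(M-i)}=\binom{N-M}{n-i}$.

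Given this pointwise symmetry, the lemma would follow by summing and substituting $j=M-i$:
\begin{align*}
    \mathds{P}[i \leq k \mid N, M, n] &= \sum_{i=0}^{k} h(N,M,n,i) \\
    &= \sum_{i=0}^{k} h(N,M,N-n,M-i) \\
    &= \sum_{j=M-k}^{M} h(N,M,N-n,j) \\
    &= \mathds{P}[i \geq M-k \mid N, M, N-n].
\end{align*}

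The step I expect to require the most care is the final equality, where I must make sure the summation range is handled correctly. In a sample of size $N-n$ drawn from a population with $M$ positives, the number of positives is at most $\min(M,N-n)$, so whenever $M>N-n$ the terms $h(N,M,N-n,j)$ with $j>N-n$ vanish under the convention $\binom{x}{y}=0$ for $y>x$. Consequently, summing $j$ from $M-k$ up to $M$ already captures every nonzero term of the upper tail $\mathds{P}[i \geq M-k \mid N, M, N-n]$, and no boundary terms are lost. This is the main (though minor) obstacle; the rest is routine once the pointwise identity is in place.
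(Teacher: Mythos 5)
Your proposal is correct and follows essentially the same route as the paper's own proof: establishing the pointwise symmetry $h(N,M,n,i) = h(N,M,N-n,M-i)$ by swapping the roles of sampled and non-sampled individuals, then summing and re-indexing the lower tail. Your additional algebraic verification of the symmetry and the remark on vanishing terms when $M > N-n$ are sound refinements but do not change the argument.
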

\begin{proof}
As before, let $h(N,M,n,i)$ denote the probability of uniformly random drawing $n$ individuals from which $i$ are positive, when the total population size is $N$ and $M$ of the total population are positive.
By swapping the roles of sampled and left-over individuals, we have that this is the same as sampling $N-n$ individuals of which $M-i$ are positive, i.e., \[h(N,M,n,i) = h(N,M,N-n,M-i).\]
By applying this symmetry and rewriting summation indices we obtain
\begin{align*}
    \mathds{P}[i \leq k \mid N, M, n] &= \sum_{i = 0, \dots, k} h(N,M,n,i) \\
    &= \sum_{i = M-k, \dots, M} h(N,M,n,M-i)  \\
    &= \sum_{i = M-k, \dots, M} h(N,M,N-n,i)  \\
    &= \mathds{P}[i \geq M-k \mid N, M, N-n].
\end{align*}
\end{proof}

To apply our tail bounds to the lower tail, we reformulate the previous result further.
For $t > 0$, $n' = N-n$ and $t'=tn/n'>0$:
\begin{align}
    &\mathds{P}[i \leq (\frac{M}{N}-t)n \mid N, M, n]  \nonumber\\
    = &\mathds{P}[i \geq M - (\frac{M}{N}-t)n \mid N, M, N-n]  \nonumber\\
    = &\mathds{P}[i \geq \frac{M}{N}(N-n)+tn \mid N, M, N-n]  \nonumber\\
    = &\mathds{P}[i \geq \frac{M}{N}(N-n)+t'(N-n) \mid N, M, N-n] \nonumber\\
    = &\mathds{P}[i \geq (\frac{M}{N}+t')n' \mid N, M, n'].\label{eq:symmetry2}
\end{align}
Here the first equality follows directly from \Cref{le:upper-lower-tail2}. 

By applying Chv\'atal's result from \Cref{le:upper-tail-bound1} to \Cref{eq:symmetry2}, Dybdahl Ahle~\cite{Dyb15} receives an upper bound on both the upper and thus also the lower tail (see \Cref{remark1}) through simple calculations.

\begin{lemma}[\cite{Dyb15}]\label{le:lower-tail-bound3}
For $t>0$ we have
    \[\mathds{P}[i \leq (\frac{M}{N}-t)n \mid N, M, n] \leq e^{-2 t^2n \frac{n}{N-n}}\]
    and thus by \Cref{remark1}
    \[\mathds{P}[i \geq (\frac{M}{N}+t)n \mid N, M, n] \leq e^{-2 t^2n \frac{n}{N-n}}.\]
\end{lemma}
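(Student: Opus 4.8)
The plan is to proceed exactly as in the derivations of~\Cref{le:lower-tail-bound1} and~\Cref{le:lower-tail-bound2}, but feeding the \emph{second} symmetry reformulation~\Cref{eq:symmetry2} into Chv\'atal's bound instead of~\Cref{eq:symmetry1}. Recall that~\Cref{eq:symmetry2} rewrites the lower tail of interest as an upper tail for the complementary sampling problem in which one draws $n' = N-n$ individuals with the rescaled deviation $t' = tn/(N-n)$, namely
\[\mathds{P}[i \leq (\tfrac{M}{N}-t)n \mid N, M, n] = \mathds{P}[i \geq (\tfrac{M}{N}+t')n' \mid N, M, n'].\]
The idea is to apply Chv\'atal's upper-tail bound from~\Cref{le:upper-tail-bound1} to the right-hand side, treating $n'$ as the sample size and $t'$ as the deviation parameter, which immediately yields the upper bound $e^{-2(t')^2 n'}$.

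It then remains to substitute $t' = tn/(N-n)$ and $n' = N-n$ into this exponent and simplify. I would compute $(t')^2 n' = \frac{t^2 n^2}{(N-n)^2}(N-n) = \frac{t^2 n^2}{N-n}$, so that $e^{-2(t')^2 n'} = e^{-2 t^2 n \frac{n}{N-n}}$, which is precisely the claimed bound on the lower tail. The two visible factors of $n$ in the exponent are exactly what the rescaling $t' = tn/n'$ produces.

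For the second inequality, I would observe that the resulting bound is independent of $M$ and therefore invoke~\Cref{remark1} to transfer it verbatim to the upper tail. I do not anticipate a genuine obstacle here: all of the combinatorial content has already been carried out in establishing the sampled/not-sampled symmetry of~\Cref{le:upper-lower-tail2} and its reformulation~\Cref{eq:symmetry2}, and the only analytic input is Chv\'atal's inequality. Thus the argument reduces to the one-line substitution above, with the sole point requiring care being the simultaneous bookkeeping of $t' = tn/(N-n)$ and $n' = N-n$.
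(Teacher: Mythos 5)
Your proposal is correct and follows exactly the paper's own argument: apply the sampled/not-sampled symmetry of \Cref{eq:symmetry2} to rewrite the lower tail as an upper tail with sample size $n'=N-n$ and deviation $t'=tn/(N-n)$, bound it by Chv\'atal's $e^{-2(t')^2 n'}$, simplify the exponent to $-2t^2 n\frac{n}{N-n}$, and transfer to the upper tail via \Cref{remark1} since the bound is independent of $M$. No gaps; this matches the paper's proof step for step.
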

\begin{proof}
By applying \Cref{eq:symmetry2} and the bound from \Cref{le:upper-lower-tail} we have for $t > 0$, $n' = N-n$ and $t'=tn/n'>0$:
\begin{align*}
    &\mathds{P}[i \leq (\frac{M}{N}-t)n \mid N, M, n] \\
    = &\mathds{P}[i \geq (\frac{M}{N}+t')n'  \mid N, M, n'] \\
    \leq &e^{-2 t'^2 n'}\\ 
    = &e^{-2 (tn/n')^2 n'}\\
    = &e^{-2 t^2n (n/n')} \\
    = &e^{-2 t^2n \frac{n}{N-n}}.
\end{align*}
Furthermore, by \Cref{remark1} we have
\begin{align*}
    & \mathds{P}[i \geq (\frac{M}{N}+t)n \mid N, M, n] \\
    \leq &e^{-2 t^2n \frac{n}{N-n}}.
\end{align*}
\end{proof}
Again, Serfling's tighter bound from~\Cref{le:upper-tail-bound2} was ignored in the work by Dybdahl Ahle~\cite{Dyb15}. As remarked in the Appendix of~\cite{GeDi24}, by applying \Cref{le:upper-tail-bound2} similarly as in \Cref{le:lower-tail-bound3}, we get the following result.
\begin{lemma}\label{le:lower-tail-bound4}
For $t>0$ we have
    \[\mathds{P}[i \leq (\frac{M}{N}-t)n \mid N, M, n] \leq e^{-2 t^2n \frac{nN}{(N-n)(n+1)}}\]
    and 
    \[\mathds{P}[i \geq (\frac{M}{N}+t)n \mid N, M, n] \leq e^{-2 t^2n \frac{nN}{(N-n)(n+1)}}.\]
\end{lemma}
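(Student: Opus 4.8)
The plan is to mirror the proof of \Cref{le:lower-tail-bound3} exactly, but to feed Serfling's sharper bound (\Cref{le:upper-tail-bound2}) into the symmetry reformulation \Cref{eq:symmetry2} in place of Chv\'atal's bound (\Cref{le:upper-tail-bound1}). The reformulation \Cref{eq:symmetry2} already rewrites the lower tail $\mathds{P}[i \leq (\frac{M}{N}-t)n \mid N, M, n]$ as the upper tail $\mathds{P}[i \geq (\frac{M}{N}+t')n' \mid N, M, n']$ with the new sample size $n' = N-n$ and the rescaled deviation $t' = tn/n' > 0$. Since Serfling's bound applies to any upper tail, I can simply invoke it on this transformed expression.

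First I would apply \Cref{le:upper-tail-bound2} with population size $N$, positive count $M$, sample size $n'$, and deviation parameter $t'$, obtaining the bound $e^{-2t'^2 n' \frac{N}{N-n'+1}}$. The only remaining work is to substitute $n' = N-n$ and $t' = tn/n'$ and to simplify the exponent. Two small computations carry this through: the deviation factor becomes $t'^2 n' = (tn/n')^2 n' = t^2 n^2/n' = t^2 n^2/(N-n)$, while Serfling's correction factor becomes $\frac{N}{N-n'+1} = \frac{N}{N-(N-n)+1} = \frac{N}{n+1}$. Multiplying these together yields the exponent $-2 t^2 \frac{n^2 N}{(N-n)(n+1)} = -2t^2 n \frac{nN}{(N-n)(n+1)}$, which is exactly the claimed bound on the lower tail.

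Finally, I would transfer this to the upper tail using \Cref{remark1}: since the resulting bound does not depend on $M$, the bound on the lower tail is automatically a bound on the upper tail as well, giving the second inequality.

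I do not anticipate any genuine obstacle here, as the argument is structurally identical to that of \Cref{le:lower-tail-bound3}. The only point requiring care is the bookkeeping of Serfling's correction term $\frac{N}{N-n'+1}$ under the substitution $n' = N-n$, where it is essential to observe that it collapses cleanly to $\frac{N}{n+1}$ rather than to the $\frac{N}{N-n+1}$ factor appearing in the original (untransformed) Serfling bound of \Cref{le:upper-tail-bound2}.
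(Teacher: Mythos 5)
Your proposal is correct and follows exactly the paper's own proof: you apply the symmetry reformulation \Cref{eq:symmetry2}, feed Serfling's bound (\Cref{le:upper-tail-bound2}) into the transformed upper tail with $n'=N-n$ and $t'=tn/n'$, simplify the exponent to $-2t^2 n \frac{nN}{(N-n)(n+1)}$, and transfer the result to the upper tail via \Cref{remark1}. Your algebraic bookkeeping, including the collapse of Serfling's correction factor to $\frac{N}{n+1}$, matches the paper's computation step for step.
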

\begin{proof}
By applying \Cref{eq:symmetry2} and the bound from \Cref{le:upper-tail-bound2} we have for $t > 0$, $n' = N-n$ and $t'=tn/n'>0$:
\begin{align*}
    &\mathds{P}[i \leq (\frac{M}{N}-t)n \mid N, M, n] \\
    = &\mathds{P}[i \geq (\frac{M}{N}+t')n'  \mid N, M, n'] \\
    \leq &e^{-2 t'^2 n' \frac{N}{N-n'+1}}\\ 
    = &e^{-2 (tn/(N-n))^2 (N-n)\frac{N}{N-(N-n)+1}}\\
    = &e^{-2 (tn/(N-n))^2 (N-n) \frac{N}{n+1}} \\
    = &e^{-2 t^2n \frac{nN}{(N-n)(n+1)}}.
\end{align*}
Furthermore, by \Cref{remark1} we have
\begin{align*}
    & \mathds{P}[i \geq (\frac{M}{N}+t)n \mid N, M, n] \\
    \leq &e^{-2 t^2n \frac{nN}{(N-n)(n+1)}}.
\end{align*}
\end{proof}

\subsection{Concentration Inequalities}
Based on the various tail bounds from \Cref{sec: tail ineq}, we can directly obtain concentration inequalities, i.e., bounds on $\mathds{P}[|i-n\frac{M}{N}| \geq tn \mid N, M, n]$ for $t>0$.

\begin{corollary}[Concentration Inequalities]\label{cor:concentration-inequalities}
    As all bounds we stated above are the same for upper and lower tail, they yield the following concentration inequalities for $t>0$:\\
    %
    $\mathds{P}[|i-n\frac{M}{N}| \geq tn \mid N, M, n] \leq $ 
    \begin{enumerate}[(B1)]
        \item $2 e^{-2 t^2 n}$ \hfill by \Cref{le:upper-tail-bound1} and \Cref{le:lower-tail-bound1}
        \item $2 e^{-2 t^2 n \frac{N}{N-n+1}}$ \hfill by \Cref{le:upper-tail-bound2} and \Cref{le:lower-tail-bound2}
        \item $2 e^{-2 t^2n \frac{n}{N-n}}$ \hfill by \Cref{le:lower-tail-bound3}
        \item $2 e^{-2 t^2n \frac{nN}{(N-n)(n+1)}}$ \hfill by \Cref{le:lower-tail-bound4}        
    \end{enumerate}
\end{corollary}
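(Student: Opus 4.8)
The plan is to reduce the two-sided statement to the one-sided tail bounds that have already been assembled. First I would recall the additive decomposition recorded at the start of \Cref{sec: tail ineq}: for $t>0$ (taking $c=tn$), the event $|i-n\frac{M}{N}| \geq tn$ splits into the two disjoint events $i \leq (\frac{M}{N}-t)n$ and $i \geq (\frac{M}{N}+t)n$, so that
\[
\mathds{P}[|i-n\tfrac{M}{N}| \geq tn \mid N, M, n] = \mathds{P}[i \leq (\tfrac{M}{N}-t)n \mid N, M, n] + \mathds{P}[i \geq (\tfrac{M}{N}+t)n \mid N, M, n].
\]
It is worth verifying once that these two events are indeed disjoint and exhaust the deviation event for every $t>0$, since this is the only place where the value of $t$ enters the structure of the argument.

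Next I would treat each of the four bound families separately, in each case bounding the lower tail and the upper tail by the matching one-sided results and observing that the two bounds coincide. For (B1) I would apply \Cref{le:upper-tail-bound1} to the upper tail and \Cref{le:lower-tail-bound1} to the lower tail; both equal $e^{-2t^2n}$, so their sum is $2e^{-2t^2n}$. For (B2) the same pattern with \Cref{le:upper-tail-bound2} and \Cref{le:lower-tail-bound2} yields $2e^{-2t^2n\frac{N}{N-n+1}}$. For (B3) and (B4) I note that \Cref{le:lower-tail-bound3} and \Cref{le:lower-tail-bound4} each already state the identical bound for both tails, so summing immediately produces $2e^{-2t^2n\frac{n}{N-n}}$ and $2e^{-2t^2n\frac{nN}{(N-n)(n+1)}}$ respectively. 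In every case the factor $2$ arises purely from adding two equal quantities, which is exactly the content of \Cref{remark1}.

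There is no genuine obstacle here: the mathematical substance lives entirely in the tail lemmas and their symmetry-based transfers from the upper to the lower tail, and the corollary is essentially bookkeeping layered on top of them. The only point requiring care is confirming that, for each family, the cited upper-tail bound and lower-tail bound really are the \emph{same} expression, so that the sum collapses to a clean factor of two rather than to a maximum or an asymmetric sum. This is guaranteed because every bound in \Cref{sec: tail ineq} is independent of $M$, which by \Cref{remark1} forces the two tails to share a common bound.
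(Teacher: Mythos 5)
Your proposal is correct and matches the paper's own reasoning: the paper treats this corollary as an immediate consequence of the tail decomposition $\mathds{P}[|i-n\frac{M}{N}| \geq tn \mid N, M, n] = \mathds{P}[i \leq (\frac{M}{N}-t)n \mid N, M, n] + \mathds{P}[i \geq (\frac{M}{N}+t)n \mid N, M, n]$ stated at the start of \Cref{sec: tail ineq}, combined with the fact that each cited lemma gives the identical ($M$-independent) bound on both tails, so the sum collapses to a factor of $2$. Your extra care in checking that the two events are disjoint and that each pair of one-sided bounds coincides is exactly the bookkeeping the paper leaves implicit.
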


\subsection{Comparison of Bounds}
We have stated several upper bounds on the tails and concentration inequalities of the hypergeometric distribution and now want to compare these bounds to see which ones are tighter in which situation, i.e., which ones one should apply in practice. 

Note that the bounds summarised in \Cref{cor:concentration-inequalities} vary by a factor in the exponent of $1$ vs. $\frac{N}{N-n+1}$ vs. $\frac{n}{N-n}$ vs. $\frac{nN}{(N-n)(n+1)}$.
Here, because of the minus in the exponent, the larger the factor, the tighter the bound gets.
We thus have the following relations.
\begin{corollary}\label{cor:comparison}
    For $N > n > 1$, we have that 
    \begin{itemize}
        \item (B2) is a tighter bound than (B1).
        \item (B4) is a tighter bound than (B3).
        \item (B3) is a tighter bound than (B1) iff $n > N/2$.
        \item (B4) is a tighter bound than (B2) iff $n > N/2$.
    \end{itemize}
\end{corollary}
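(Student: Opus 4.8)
The plan is to reduce all four comparisons to comparisons of the single factor appearing in each exponent. Every bound in \Cref{cor:concentration-inequalities} has the shape $2e^{-2t^2 n\,\gamma}$ with
\[\gamma_1 = 1,\quad \gamma_2 = \tfrac{N}{N-n+1},\quad \gamma_3 = \tfrac{n}{N-n},\quad \gamma_4 = \tfrac{nN}{(N-n)(n+1)},\]
where $\gamma_i$ belongs to (Bi), and the prefactor $2t^2 n$ is strictly positive for $t>0$ and $n>1$. Hence $x\mapsto 2e^{-2t^2 n\,x}$ is strictly decreasing, so the bound with the larger factor is strictly tighter. This turns ``(Bi) tighter than (Bj)'' into ``$\gamma_i>\gamma_j$'', and the two ``iff'' statements into equivalences of the form ``$\gamma_i>\gamma_j \iff n>N/2$''. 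I would first record that under $N>n>1$ the three denominators $N-n$, $N-n+1$ and $n+1$ are all strictly positive, so that the cross-multiplications below never flip an inequality.

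For the two unconditional items I would argue directly. For (B2) versus (B1), $\gamma_2>\gamma_1$ reads $\tfrac{N}{N-n+1}>1$, i.e.\ $N>N-n+1$, i.e.\ $n>1$, which holds by hypothesis. For (B4) versus (B3) I would form the quotient $\gamma_4/\gamma_3 = \tfrac{N}{n+1}$, so that $\gamma_4>\gamma_3$ is equivalent to $N>n+1$. This is where I expect the only genuine subtlety: the stated hypothesis $N>n$ only gives $N\ge n+1$, and at the boundary $N=n+1$ one computes $\gamma_3=\gamma_4=n$, so the two bounds coincide rather than one being strictly tighter. I would therefore either strengthen the hypothesis to $N>n+1$ for this item or phrase it as ``at least as tight''.

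For the two conditional items the computation is again elementary. The comparison $\gamma_3>\gamma_1$ is $\tfrac{n}{N-n}>1$, equivalent to $2n>N$, i.e.\ $n>N/2$. For (B4) versus (B2) I would cross-multiply $\gamma_4>\gamma_2$ using the positivity noted above; after cancelling the common factor $N$ this reduces to $n(N-n+1)>(N-n)(n+1)$, and expanding both sides the difference collapses to $2n-N$, so $\gamma_4>\gamma_2 \iff n>N/2$. This last expansion is the most error-prone step, but it is purely mechanical; once it is in place all four claims follow from the monotonicity reduction of the first paragraph.
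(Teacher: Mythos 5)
Your proposal is correct and takes essentially the same route as the paper, which likewise reduces all four comparisons to comparing the factors $1$, $\frac{N}{N-n+1}$, $\frac{n}{N-n}$, $\frac{nN}{(N-n)(n+1)}$ in the exponent (noting that a larger factor gives a tighter bound) and leaves the remaining algebra implicit. Your observation about the boundary case is a legitimate refinement the paper glosses over: for integer parameters with $N=n+1$ the factors of (B3) and (B4) coincide ($\gamma_3=\gamma_4=n$), so the strict claim that (B4) is tighter than (B3) really requires $N>n+1$, or the weaker phrasing ``at least as tight.''
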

This shows, that the results from \Cref{le:lower-tail-bound2} and \Cref{le:lower-tail-bound4} are indeed improvements over existing bounds.


\section{Confidence Intervals}\label{sec:conf-intervals}

For a set of $n$ samples from the hypergeometric distribution with $i$ positive individuals, we can estimate the number of positive individuals $M$ from the whole population of size $N$ by $\frac{i}{n}N$.
To quantify the uncertainty around this estimation, we can consider confidence intervals.
That is, we want to identify an interval around our estimation such that with high probability the true value $M$ is in this interval. 
More formally, for a desired confidence level $0<(1-\delta)<1$, we find $c$ such that with probability $(1-\delta)$ we have $M \in [\frac{i}{n}N - c, \frac{i}{n}N + c]$.
Thus, by choosing a high value of $(1-\delta)$, or in other words a small $\delta >0$, we can with high certainty say that $M$ is equal to our sample estimate plus/minus some error of $c$. 



To find such $c$, we can apply the concentration inequalities summaries in \Cref{cor:concentration-inequalities}. Recall that by \Cref{cor:comparison} the bounds (B2) and (B4) are always tighter than (B1) and (B3), respectively. We thus concentrate on these two cases only.
\begin{lemma}[Confidence Intervals for Given $\delta$]\label{le:confidence-intervals-c}
    Let $\delta > 0$ be a given confidence level. Then we have with probability $(1-\delta)$ that $M \in [\frac{i}{n}N - c, \frac{i}{n}N + c]$
    for 
    \begin{enumerate}[(C1)]
        \item $c = N\sqrt{-\frac{N-n+1}{2nN}\ln{\delta / 2}}$ if $n \leq N/2$, and
        \item $c = N\sqrt{-\frac{(N-n)(n+1)}{2n^2N}\ln{\delta / 2}}$ if $n > N/2$.
    \end{enumerate}
\end{lemma}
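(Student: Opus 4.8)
The plan is to invert the two-sided concentration inequalities from \Cref{cor:concentration-inequalities} into a statement about the unknown $M$. The key starting observation is purely algebraic: the event inside the probability, $|i - n\frac{M}{N}| < tn$, is equivalent (divide by $n$, multiply by $N$) to $|\frac{i}{n}N - M| < tN$, i.e., to $M \in (\frac{i}{n}N - tN,\, \frac{i}{n}N + tN)$. Hence, if I set $c = tN$, the complementary event $|i - n\frac{M}{N}| \geq tn$ is precisely the event that $M$ lies outside the interval $[\frac{i}{n}N - c,\, \frac{i}{n}N + c]$ (the difference between the strict and closed boundary only helps, since the open interval is contained in the closed one). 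Its probability is controlled by the right-hand sides of (B2) and (B4).

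Next I would calibrate $t$ so that the relevant bound equals $\delta$. Setting $2e^{-2t^2 n \frac{N}{N-n+1}} = \delta$ and solving gives $t^2 = -\frac{N-n+1}{2nN}\ln(\delta/2)$, while setting $2e^{-2t^2 n\frac{nN}{(N-n)(n+1)}} = \delta$ gives $t^2 = -\frac{(N-n)(n+1)}{2n^2N}\ln(\delta/2)$; since $\ln(\delta/2) < 0$ for $\delta < 2$, both right-hand sides are positive and $t$ is well defined. Substituting $c = tN$ then reproduces exactly the expressions (C1) and (C2). Crucially, because these tail bounds are independent of $M$ (as emphasized in \Cref{remark1}), the resulting probability statement holds whatever the true, unknown value of $M$ is, which is what legitimizes reading it as a confidence statement. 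Finally, the case split $n \leq N/2$ versus $n > N/2$ is dictated by \Cref{cor:comparison}: for $n \leq N/2$ the bound (B2) is at least as tight, yielding (C1), whereas for $n > N/2$ the bound (B4) is tighter, yielding (C2).

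I expect the main point requiring care is not the algebra but the logical inversion. One must justify that a concentration inequality, which is phrased for a fixed population parameter $M$ and a random sample count $i$, can be rearranged into a high-probability containment statement about $M$ inside a data-dependent interval centered at $\frac{i}{n}N$. This rests entirely on the uniformity of the bound in $M$, so I would make that dependence (or rather independence) explicit when asserting that the $(1-\delta)$ guarantee is valid simultaneously for every admissible $M$.
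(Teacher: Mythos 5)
Your proposal is correct and follows essentially the same route as the paper's own proof: set $t = c/N$, equate the concentration bounds (B2) and (B4) from \Cref{cor:concentration-inequalities} to $\delta$, and solve for $t$, with the case split $n \leq N/2$ versus $n > N/2$ justified by \Cref{cor:comparison}. Your additional discussion of the logical inversion --- that the bounds' independence of $M$ is what makes the rearranged statement a valid confidence claim about the unknown parameter --- is a point the paper leaves implicit, but it does not change the substance of the argument.
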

\begin{proof}
    We are interested in (the smallest) $c$ such that $\mathds{P}[|i-n\frac{M}{N}| \geq cn/N \mid N, M, n] \leq \delta$.
    When setting $t = c/N$ we can readily apply the concentration inequalities from \Cref{cor:concentration-inequalities}. 
    By setting $\delta$ equal to the corresponding bound of the concentration inequalities, we obtain the different types of confidence bounds $c$.
    \begin{enumerate}[(C1)]
        \item Based on \Cref{le:upper-tail-bound2} and \Cref{le:lower-tail-bound2}, i.e., (B2):
        \begin{align*}
            & \delta && = 2 e^{-2 t^2 n \frac{N}{N-n+1}} \\
            \Leftrightarrow
            & t && = \sqrt{-\frac{N-n+1}{2nN}\ln{\delta / 2}}\\
            \Leftrightarrow
            & c && = N\sqrt{-\frac{N-n+1}{2nN}\ln{\delta / 2}}
        \end{align*}
        \item Based on \Cref{le:lower-tail-bound4}, i.e., (B4):
        \begin{align*}
            & \delta && = 2 e^{-2 t^2n \frac{nN}{(N-n)(n+1)}}\\
            \Leftrightarrow
            & t && = \sqrt{-\frac{(N-n)(n+1)}{2n^2N}\ln{\delta / 2}}\\
            \Leftrightarrow
            & c && = N\sqrt{-\frac{(N-n)(n+1)}{2n^2N}\ln{\delta / 2}}
        \end{align*}       
    \end{enumerate}
\end{proof}

Note, that similarly we could assume a given confidence bound $c$ which indicates the allowed error, and want to determine the probability $(1-\delta)$ with which the collected $M$ falls in the allowed error range around our sample estimate $\frac{i}{n}N$. We can determine $\delta$ by setting $t = c/N$ and simply applying bounds (B2) or (B4). (We could also apply (B1) or (B3) but as stated in \Cref{cor:comparison} these lead to weaker results.)
For ease of use for the reader, we write this out.
\begin{corollary}[Confidence Intervals for Given $c$]\label{le:confidence-intervals-d}
    Let $c > 0$ be a given confidence bound. Then we have with probability $(1-\delta)$ that $M \in [\frac{i}{n}N - c, \frac{i}{n}N + c]$
    for 
    \begin{enumerate}[(D1)]
        \item $\delta = 2 e^{-2 c^2 \frac{n}{N(N-n+1)}}$ if $n \leq N/2$, and
        \item $\delta = 2 e^{-2 c^2 \frac{n^2}{N(N-n)(n+1)}}$ if $n > N/2$.
    \end{enumerate}
\end{corollary}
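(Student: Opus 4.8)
The plan is to mirror the proof of \Cref{le:confidence-intervals-c}, but instead of solving the concentration inequality for the half-width $c$, I would solve it for the failure probability $\delta$. The starting point is the observation that the interval-containment event and the tail event are complementary: the statement $M \in [\frac{i}{n}N - c, \frac{i}{n}N + c]$ is equivalent to $|\frac{i}{n}N - M| \leq c$, and multiplying through by $n/N > 0$ shows this is the same as $|i - n\frac{M}{N}| \leq cn/N$. Hence, setting $t = c/N$, the desired event is exactly the complement of $\{|i - n\frac{M}{N}| \geq tn\}$; at the boundary the strict/non-strict distinction only works in our favour, so a union-bound-free complement argument suffices.

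First I would fix $t = c/N$ and invoke \Cref{cor:concentration-inequalities}, which gives $\mathds{P}[|i - n\frac{M}{N}| \geq tn \mid N, M, n] \leq 2 e^{-2 t^2 n \frac{N}{N-n+1}}$ via (B2) and $\leq 2 e^{-2 t^2 n \frac{nN}{(N-n)(n+1)}}$ via (B4). Taking complements yields $\mathds{P}[M \in [\frac{i}{n}N - c, \frac{i}{n}N + c]] \geq 1 - \delta$ whenever $\delta$ is set equal to the relevant right-hand side. It then only remains to substitute $t = c/N$ into these bounds and simplify the exponent.

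For (D1) the substitution into (B2) gives $-2 t^2 n \frac{N}{N-n+1} = -2 \frac{c^2}{N^2} n \frac{N}{N-n+1} = -2 c^2 \frac{n}{N(N-n+1)}$, hence $\delta = 2 e^{-2 c^2 \frac{n}{N(N-n+1)}}$, which I would state in the regime $n \leq N/2$ where (B2) is the sharper admissible bound. For (D2) the substitution into (B4) gives $-2 t^2 n \frac{nN}{(N-n)(n+1)} = -2 \frac{c^2}{N^2} n \frac{nN}{(N-n)(n+1)} = -2 c^2 \frac{n^2}{N(N-n)(n+1)}$, hence $\delta = 2 e^{-2 c^2 \frac{n^2}{N(N-n)(n+1)}}$, valid for $n > N/2$ where (B4) dominates by \Cref{cor:comparison}.

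There is no genuine obstacle here: the corollary is a direct reparametrisation of the concentration inequalities, obtained by reading $\delta$ as the output rather than the input. The only things requiring care are the bookkeeping of the substitution $t = c/N$ (so that the $1/N^2$ factor cancels one power of $N$ and leaves the exponents exactly as claimed) and keeping the two population-size regimes ($n \leq N/2$ versus $n > N/2$) matched to the tighter of the available bounds, precisely as in \Cref{le:confidence-intervals-c}.
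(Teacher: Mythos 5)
Your proposal is correct and follows exactly the route the paper intends: the paper derives this corollary by setting $t = c/N$ and applying bounds (B2) and (B4) from \Cref{cor:concentration-inequalities}, with the regime split $n \leq N/2$ versus $n > N/2$ justified by \Cref{cor:comparison}, which is precisely your argument. Your algebraic substitutions and the complement step match the paper's (unwritten-out) reasoning, so there is nothing to add.
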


\section{Necessary Sample Sizes}\label{sec:nec-samples}
When designing a study one needs to decide on a good number of samples $n$. Imagine the size of the population $N$ is known (or can be estimated) and we want to estimate the number $M$ of positive individuals in the population. If a target confidence level $(1-\delta)$ and approximation bound $c$ is given such that the resulting estimate $\frac{i}{n} N$ satisfies $M \in [\frac{i}{n}N - c, \frac{i}{n}N + c]$ with probability $(1-\delta)$, we can determine the necessary number of samples by rewriting the concentration inequalities from \Cref{cor:comparison}. Again, we focus our attention on applying (B2) and (B4)

\begin{lemma}[Sample Size]\label{le:sample-size}
    Let $(1-\delta)>0$ be a given confidence level and $c$ a given approximation bound for the approximation of the number $M$ of positive individuals in a population of size $N$.
    Then we have with probability $(1-\delta)$ that $M \in [\frac{i}{n}N - c, \frac{i}{n}N + c]$, where $i$ is the number of positive individuals among a subset of $n$ samples taken uniformly at random,
    for $x=(\frac{N}{c})^2 > 0$, $y=-\frac{1}{2}\ln{\delta / 2} > 0$, if 
    \begin{enumerate}[(S1)]
        \item $n \geq \frac{(N+1)xy}{N+xy} $  \hfill for a population of size $N \leq \frac{c^2}{y}-2$,
        \item  $n \geq \frac{(N-1)xy}{2(N+xy)} + \sqrt{(\frac{(N-1)xy}{2(N+xy)})^2 + \frac{Nxy}{N+xy}} \geq \frac{Nxy}{N+xy}$ 
        
        \hfill for a population of size $N > \frac{c^2}{y}-2$.
    \end{enumerate}
\end{lemma}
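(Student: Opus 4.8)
The plan is to invert the two sharpest concentration inequalities of \Cref{cor:concentration-inequalities}, namely (B2) and (B4), in the same way that \Cref{le:confidence-intervals-c} inverted them to obtain $c$, but now solving for $n$ instead. The key observation is that the event $M \notin [\frac{i}{n}N - c, \frac{i}{n}N + c]$ coincides with $|i - n\frac{M}{N}| \geq cn/N$, so after setting $t = c/N$ the confidence requirement is exactly that the relevant tail bound is at most $\delta$. Introducing the abbreviations $x = (N/c)^2$ and $y = -\frac{1}{2}\ln(\delta/2)$ keeps the resulting algebra compact.

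First I would establish (S1). Substituting $t = c/N$ into the (B2) bound and demanding $2 e^{-2 t^2 n \frac{N}{N-n+1}} \leq \delta$, I take logarithms and use $t^2 = 1/x$ together with $-\ln(\delta/2) = 2y$ to reduce the condition to $\frac{nN}{x(N-n+1)} \geq y$. After clearing the denominator this is \emph{linear} in $n$, and rearranging gives $n(N+xy) \geq xy(N+1)$, i.e.\ $n \geq \frac{(N+1)xy}{N+xy}$, which is precisely (S1). For (S2) I would repeat the computation with the (B4) bound $2 e^{-2 t^2 n \frac{nN}{(N-n)(n+1)}} \leq \delta$; the same manipulations lead to $\frac{n^2 N}{x(N-n)(n+1)} \geq y$, which after expanding becomes the quadratic inequality $g(n) := (N+xy)n^2 - xy(N-1)n - xyN \geq 0$. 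Since the leading coefficient $N+xy$ is positive and the constant term $-xyN$ is negative, the product of the roots is negative, so $g$ has exactly one positive root; the inequality $g(n)\ge 0$ holds precisely for $n$ at least that root, and the quadratic formula yields the closed form stated in (S2). The accompanying lower bound $\frac{Nxy}{N+xy}$ then follows from the elementary fact that $A + \sqrt{A^2 + B} \geq B$ whenever $B - 2A \leq 1$: with $A = \frac{(N-1)xy}{2(N+xy)}$ and $B = \frac{Nxy}{N+xy}$ a direct computation gives $B - 2A = \frac{xy}{N+xy} < 1$.

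The step I expect to be the main obstacle is justifying the case split, because (B2) is valid only for $n \leq N/2$ and (B4) only for $n > N/2$ (by \Cref{cor:comparison}, (B4) overtakes (B2) exactly at $n = N/2$). I would show that the two regimes meet cleanly at the threshold $N = \frac{c^2}{y} - 2$. On the one hand, the linear bound of (S1) satisfies $\frac{(N+1)xy}{N+xy} \leq N/2$ iff $xy(N+2) \leq N^2$, and substituting $x = N^2/c^2$ this is equivalent to $N \leq \frac{c^2}{y} - 2$; hence in that range the minimal admissible $n$ from (B2) really does lie in its domain $n \leq N/2$, legitimising (S1). On the other hand, evaluating the (B4)-quadratic at $N/2$ gives $g(N/2) = \frac{N}{4}\bigl(N^2 - xy(N+2)\bigr)$, which is negative iff $N > \frac{c^2}{y} - 2$, so under exactly the complementary condition the positive root of $g$ exceeds $N/2$ and (S2) is consistent with the domain of (B4). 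The perfect coincidence of the two thresholds is no accident: at $n = N/2$ the exponent factors of (B2) and (B4) both equal $\frac{2N}{N+2}$, so the two bounds agree there and their inversions cross $N/2$ simultaneously. Assembling these pieces yields the stated dichotomy.
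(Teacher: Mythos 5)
Your core algebra is correct and follows essentially the same route as the paper: you invert (B2) into the linear condition $n(N+xy) \ge (N+1)xy$, and (B4) into the quadratic $g(n) = (N+xy)n^2 - (N-1)xy\,n - Nxy \ge 0$, whose unique positive root is the expression in (S2). Your derivation of the auxiliary estimate $\ge \frac{Nxy}{N+xy}$ (via the fact that $A+\sqrt{A^2+B}\ge B$ whenever $B>0$ and $B-2A\le 1$) is a valid alternative to the paper's argument, which instead observes $(N-n)(n+1)xy > (N-n)nxy$ and simplifies the resulting inequality.

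Where you diverge from the paper, and where your reasoning is conceptually wrong, is the case split. You claim that ``(B2) is valid only for $n \le N/2$ and (B4) only for $n > N/2$.'' That is false: all the bounds of \Cref{cor:concentration-inequalities} hold for every sample size $1\le n \le N-1$; \Cref{cor:comparison} states which bound is \emph{tighter} in which regime, not where each is \emph{valid}. This misreading matters under your own logic: case (S1) asserts sufficiency for \emph{every} $n \ge \frac{(N+1)xy}{N+xy}$, including $n > N/2$, and your consistency check (that the minimal admissible $n$ lies below $N/2$ when $N\le \frac{c^2}{y}-2$) does not cover those larger $n$ — if (B2) really were invalid beyond $N/2$, your proof of (S1) would have a hole. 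Fortunately the premise is false: since both bounds hold everywhere, your inversion of (B2) already proves (S1) unconditionally, and likewise your inversion of (B4) proves (S2) unconditionally, so no validity discussion is needed for sufficiency. The actual role of the threshold $N = \frac{c^2}{y}-2$ is optimality: the paper derives it by directly comparing the two resulting sample sizes, showing $\frac{(N+1)xy}{N+xy} \le \frac{(N-1)xy}{2(N+xy)} + \sqrt{(\frac{(N-1)xy}{2(N+xy)})^2 + \frac{Nxy}{N+xy}}$ if and only if $(N+2)xy \le N^2$, i.e.\ $N \le \frac{c^2}{y}-2$, so that each case prescribes the smaller of the two requirements. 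Your computations — that $\frac{(N+1)xy}{N+xy}\le N/2 \Leftrightarrow xy(N+2)\le N^2$ and $g(N/2) = \frac{N}{4}(N^2 - xy(N+2))$ — are correct arithmetic and reduce to the very same inequality $(N+2)xy\le N^2$, but they should be presented as identifying which formula is preferable in which regime, not as a validity requirement for the underlying tail bounds.
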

\begin{proof}
    From \Cref{le:confidence-intervals-c} we get the  different cases:
    \begin{enumerate}
        \item From (C1):
        \begin{align*}
            & c && = N\sqrt{-\frac{N-n+1}{2nN}\ln{\delta / 2}}\\
            \Leftrightarrow
            & (\frac{c}{N})^2 && = -\frac{N-n+1}{2nN}\ln{\delta / 2}\\
            \Leftrightarrow
            & 1 && = \frac{N-n+1}{nN}xy\\
            \Leftrightarrow
            & nN && = (N+1)xy-nxy\\
            \Leftrightarrow
            & n(N+xy) && = (N+1)xy\\
            \Leftrightarrow
            & n && = \frac{(N+1)xy}{N+xy}     
        \end{align*}
        \item  From (C2):
        \begin{align*}
            & c && = N\sqrt{-\frac{(N-n)(n+1)}{2n^2N}\ln{\delta / 2}}\\
            \Leftrightarrow
            & (\frac{c}{N})^2 && = -\frac{(N-n)(n+1)}{2n^2N}\ln{\delta / 2}\\
            \Leftrightarrow
            & 1 && = \frac{(N-n)(n+1)}{n^2N}xy\\
            \Leftrightarrow
            & n^2N && = (N-n)(n+1)xy\\
            \Leftrightarrow
            & n^2N && = (Nn - n^2 + N - n)xy\\
            \Leftrightarrow
            & 0 && = - (N+xy)n^2 + (N- 1)xyn + Nxy    
        \end{align*}
        Because the number of samples must be positive and $xy>0$, \linebreak
        $n = \frac{(N-1)xy}{2(N+xy)} + \sqrt{(\frac{(N-1)xy}{2(N+xy)})^2 + \frac{Nxy}{N+xy}}$. 
        To get a simpler estimate, we not that 
        $(N-n)(n+1)xy > (N-n)nxy$ and thus $n^2N > (N-n)nxy$ which holds if and only if $n > \frac{Nxy}{N+xy}$.
    \end{enumerate}
    We can now check in which case the sample size suggested by (S1) is smaller than the sample size suggested by (S2).
    \begin{align*}
        & \frac{(N+1)xy}{N+xy} && \leq \frac{(N-1)xy}{2(N+xy)} + \sqrt{(\frac{(N-1)xy}{2(N+xy)})^2 + \frac{Nxy}{N+xy}} \\
        \Leftrightarrow
        & \frac{(N+3)xy}{2(N+xy)} && \leq \sqrt{(\frac{(N-1)xy}{2(N+xy)})^2 + \frac{Nxy}{N+xy}} \\
        \Leftrightarrow
        & (N+3)^2xy && \leq (N-1)^2xy + 4(N+xy)N \\    
        \Leftrightarrow
        & 4Nxy + 8xy && \leq 4N^2 \\ 
        \Leftrightarrow
        & (N + 2)xy && \leq N^2 \\     
        \Leftrightarrow
        & N && \leq \frac{c^2}{y} - 2     
    \end{align*}
\end{proof}

\section{Extended Example}\label{sec:extended-example}
Consider a fictitious 2021 study on the population of Scandinavia (here: Denmark, Norway and Sweden) for which citizens age 16+ are uniformly at random selected and asked whether they perceive "severe restrictions in daily activities caused by health problems". 
For simplicity, let us assume that the total population size of persons aged 16+ in Scandinavia is \linebreak$N = 17,793,691$
\footnote{A number extracted from \url{www.nordicstatistics.org}.}. 
Assume the number of uniformly at random selected citizens is $n = 100,000$ and $i = 5720$ of these answer that they perceive severe restrictions in daily activities caused by health problems\footnote{A realistic number according to \url{www.nordicstatistics.org}.}.
Then we can estimate that the fraction of people aged 16+ that perceive severe restrictions in daily activities caused by health problems is $\frac{i}{n} = 5.72 \%$.
Hence the estimated total number of people aged 16+ that perceive severe restrictions in daily activities caused by health problems is $\frac{i}{n}N = 1,017,800$.

Denote by $M$ the true total number of people aged 16+ that perceive severe restrictions in daily activities caused by health problems in the population.

For a \textit{given confidence level} of $(1-\delta)=95\%$ we can compute the bounds $c$ such that with probability $0.95$ we have $M \in [\frac{i}{n}N - c, \frac{i}{n}N + c]$.
Note that $n < N/2$ such that (C1) applies, i.e., $c = N\sqrt{-\frac{N-n+1}{2nN}\ln{\delta / 2}} \linebreak = 17 793 691\sqrt{-\frac{17 793 691-100 000+1}{2(100 000)(17 793 691)}\ln{(0.05) / 2}} = 76 203.42$.
Hence with probability $0.95$ we have $M \in [1 017 800 - 76 203.42, 1 017 800 + 76 203.42] \linebreak = [941 596.58, 1 094 003.42]$. That is to say with confidence $0.95$ between ca. $5.29-6.15 \%$ (a range of $0.86\%$) of the population aged 16+ perceive severe restrictions in daily activities caused by health problems.

For comparison, the most commonly used concentration inequality (B1) $\mathds{P}[|i-n\frac{M}{N}| \geq tn \mid N, M, n] \leq 2 e^{-2 t^2 n}$ yields for the same confidence level a confidence interval with bounds $c' = N\sqrt{-\frac{1}{2n}\ln{\delta / 2}} = 76 418.5$ resulting in a slightly wider interval of $M \in [1 017 800 - 76 418.5, 1 017 800 + 76 418.5] \linebreak = [941 381.5, 1 094 218.5]$.

For a \textit{given approximation bound} of $c=62 278 \approx 0.35\%N$ we can compute the confidence level $(1-\delta)$ such that with probability $(1-\delta)$ we have $M \in [\frac{i}{n}N - c, \frac{i}{n}N + c]$.
Because $n < N/2$ we can apply (D1), i.e., $\delta = 2 e^{-2 c^2 \frac{n}{N(N-n+1)}} = 2 e^{-2 (62 278)^2 \frac{100 000}{(17 793 691)(17 793 691-100 000+1)}} \approx 0.1702$.
Hence with probability ca. $0.83$ we have $M \in [1 017 800 - 62 278, 1 017 800 \linebreak+ 62 278] = [955 522, 1 080 078]$. That is to say  with confidence $0.8298$ between ca. $5.37-6.07 \%$ (with the desired width of $0.7\%$) of the population aged 16+ perceive severe restrictions in daily activities caused by health problems.

For comparison, if we base our calculations on (B1) we have $\delta = 2 e^{-2 c^2 \frac{n}{N^2}}$\linebreak$= 2 e^{-2 (62 278)^2 \frac{100 000}{(17 793 691)^2}} \approx 0.1725$ and thus achieve only a confidence level of $0.8274$ for the same interval. Naturally, for problems at a larger scale, the difference will be even starker.

Finally let us assume that no individuals have been questioned yet and that we are in the planning phase of the study. We want our results to be of the form with confidence level $0.95$ we have $M \in [\frac{i}{n}N - 0.25\%N, \frac{i}{n}N \linebreak+ 0.25\%N]$, i.e., $\delta = 0.05$ and $c = 0.25\%N \approx 44 484$. Then we have \linebreak$x = (\frac{N}{c})^2 = 160 000$ and $y=-\frac{1}{2}\ln{\delta / 2} \approx 1.8444$ and thus $N \leq \frac{c^2}{y}-2$. We can determine the necessary number of samples by applying (S1) as $n = \frac{(N+1)xy}{N+xy} \approx 290296$.

\section{Conclusion}

We have listed some tail inequalities from the literature, some of which were never formally published, for the hypergeometric distribution in a unified notation. This allowed us to determine two inequalities that are tighter than the others while maintaining a usable format. We then showed how to obtain confidence intervals and necessary sample sizes from these two tighter bounds. The extended example demonstrates the usability of the listed results.


\end{document}